\providecommand{\U}[1]{\protect\rule{.1in}{.1in}}
\newtheorem{theorem}{Theorem}[section]
\newtheorem*{acknowledgement*}{Acknowledgements}
\newtheorem{corollary}[theorem]{Corollary}
\newtheorem{lemma}[theorem]{Lemma}
\newtheorem{remark}[theorem]{Remark}
\def\<{\left\langle}
\def\>{\right\rangle}
\newcommand{\matrice}{\begin{pmatrix}}
\newcommand{\ok}{\end{pmatrix}}
\newcommand{\dmatrice}{\begin{vmatrix}}
\newcommand{\dok}{\end{vmatrix}}
\def\<{\left\langle}
\def\>{\right\rangle}
\begin{document}
\title[Index of $f$-minimal hypersurfaces: general ambients]{Index and first Betti number of $f$-minimal hypersurfaces: general ambients}

\subjclass[2010]{53C42, 53C21}
\keywords{$f$-minimal hypersurfaces,  index estimates, Betti number, shrinking Ricci solitons}

\author[Debora Impera]{Debora Impera}
\address[Debora Impera]{Dipartimento di Scienze Matematiche "Giuseppe Luigi Lagrange", Politecnico di Torino, Corso Duca degli Abruzzi, 24, Torino, Italy, I-10129}
\email{debora.impera@gmail.com}

\author[Michele Rimoldi]{Michele Rimoldi}
\address[Michele Rimoldi]{Dipartimento di Scienze Matematiche "Giuseppe Luigi Lagrange", Politecnico di Torino, Corso Duca degli Abruzzi, 24, Torino, Italy, I-10129}
\email{michele.rimoldi@gmail.com}

%\date{\today}

\begin{abstract}
We generalize a method by L. Ambrozio, A. Carlotto, and B. Sharp to study the Morse index of closed $f$-minimal hypersurfaces isometrically immersed in a general weighted manifold. The technique permits, in particular, to obtain a linear lower bound on the Morse index via the first Betti number for closed $f$-minimal hypersurfaces in products of some compact rank one symmetric spaces with an Euclidean factor, endowed with the rigid shrinking gradient Ricci soliton structure. These include, as particular cases, all cylindric shrinking gradient Ricci solitons.
\end{abstract}

\maketitle
\tableofcontents

\section{Introduction}

Throughout this note, we consider as ambient manifold an $(m+1)$-dimensional weighted manifold $M_{f}=(M, g, e^{-f}d\mathrm{vol}_{M})$, namely a Riemannian manifold $(M^{m+1}, g)$ endowed with a measure with smooth positive density $e^{-f}$ with respect to the Riemannian volume measure $d\mathrm{vol}_{M}$. We are interested in closed isometrically immersed orientable $f$-minimal hypersurfaces  $\Sigma$ in $M_{f}$, namely critical points of the weighted volume functional
\[
\ \mathrm{vol}_{f}(\Sigma)=\int_{\Sigma}e^{-f}d\mathrm{vol}_{\Sigma}. 
\]
These are exactly those immersions with vanishing $f$-mean curvature
\[
\ H_{f}\doteq H+\frac{\partial f}{\partial N}=0.
\]
Here $H=\mathrm{tr}A$, and $A$ is the shape operator of the immersion, defined as $AX=-\nabla^{M}_{X}N$ on all tangent vector $X$ to $\Sigma$.

The connection between the geometry of the ambient weighted manifold and $f$-minimal hypersurfaces occurs via the second variation of the weighted area functional. Given a $f$-minimal hypersurface and a normal variational vector field $uN$, this yields the quadratic form
\[
\ Q_{f}(u,u)=\int_{\Sigma}\left(|\nabla u|^2-\left(\mathrm{Ric}_{f}^{M}(N, N)+|A|^2\right)u^{2}\right)e^{-f}d\mathrm{vol}_{\Sigma},
\]
where $\mathrm{Ric}^{M}_{f}=\mathrm{Ric}^{M}+\mathrm{Hess}^{M}(f)$ denotes the Bakry-\'Emery Ricci tensor of the ambient weighted manifold $M_f$. It follows that stability properties of $f$-minimal hypersurfaces are taken into account by spectral properties of the following weighted Jacobi operator
\[
\ L_f=\Delta_f-\left(\mathrm{Ric}^{M}_{f}(N,N)+|A|^2\right),
\]
whose leading term is the $f$-Laplacian $\Delta_f=\Delta+\left\langle \nabla f, \nabla\,\right\rangle$ on $\Sigma_f$. Note that we are using the convention that $\Delta\doteq-\mathrm{div}\left(\nabla\right)$. Dealing with closed smooth hypersurfaces, the operator $L_{f}$ is self-adjoint with respect to the measure $e^{-f}d\mathrm{vol}_{\Sigma}$ and elliptic, and hence has discrete spectrum which tends to infinity. We then define the $f$-index of $\Sigma$ as
\[
\mathrm{Ind}_f(\Sigma)=\sharp\{\textrm{negative eigenvalues of }L_f\},
\]
i.e. the maximal dimension of the space on which the index form $Q_{f}$ is negative definite.

For a more detailed discussion about stability properties and estimates on the $f$-index of $f$-minimal hypersurfaces we refer the reader to the introduction in \cite{IR1} and \cite{IRS} and references therein.

\subsection{Topological complexity implies high instability}The recent impressive developments in the existence theory for minimal immersions through min-max method have motivated a renewed interest in studying estimates on the Morse index of minimal immersions. 

One possible way to control instability is through topological invariants (in particular the first Betti number) of the minimal hypersurface. This was first investigated by A. Ros in \cite{Ros} for immersed minimal surfaces in $\mathbb{R}^{3}$, or a quotient of it by a group of translations, and then, in higher dimension, by A. Savo when the ambient manifold is the round sphere, \cite{Savo}. The idea behind these works is to use harmonic $1$-forms and some ambient structure to make some interesting functions out of those, permitting to get a lower bound on the index. This technique was recently extended and generalized by L Ambrozio, A. Carlotto and B. Sharp, \cite{ACS}, who showed, from a general perspective, that the Morse index is bounded from below by a linear function of the first Betti number for all closed minimal hypersurfaces in a large class of positively curved ambient manifolds. This comprises, for instance, all compact rank one symmetric spaces. In their work, the ambient structure is taken into account by the existence of some nice isometric immersion into an Euclidean space $\mathbb{R}^{d}$ via which one can produce instability directions, again using harmonic $1$-forms. For some other recent developments see also \cite{MendRad}.

In the recent paper \cite{IRS} we related the Morse index and the first Betti number of self-shrinkers for the mean curvature flow and, more generally, of $f$-minimal hypersurfaces in a weighted Euclidean space endowed with a convex weight. Following the ideas adopted in \cite{IRS} and motivated by the approach introduced in \cite{ACS},  in this short note we want to study the same problem in the wider setting of $f$-minimal hypersufaces sitting in a general weighted manifold.

\subsection{Main $f$-index estimate}As in \cite{IRS}, in order to get our results we will produce good test-functions for the quadratic form $Q_{f}$ using $f$-harmonic 1-forms of the $f$-minimal hypersurface. Recall that $f$-harmonic 1-forms on compact hypersurfaces are exactly those 1-forms $\omega$ which are simultaneously closed and $f$-coclosed, i.e,
\[
\ d\omega=\delta_{f}\omega=0,
\]
with $\delta_{f}\omega=\delta\omega+\omega(\nabla f)$ the weighted codifferential. 
\medskip

Our main $f$-index estimate will follow from the following concentration of the spectrum inequality.

\begin{theorem}\label{mainthmfminimalgen}
Let $M_{f}$ be am $(m+1)$-dimensional Riemannian manifold that is isometrically embedded in some Euclidean space $\mathbb{R}^{d}$, $d\geq m+1$. Let $\Sigma^{m}$ be a closed isometrically immersed $f$-minimal hypersurface of $M^{m+1}$. Assume that there exists a real number $\eta$ and a $q$-dimensional vector space $\mathcal{V}$ of $f$-harmonic 1-forms on $\Sigma^m$ such that for any $\omega\in \mathcal{V}\backslash \{0\}$,
\begin{align}
&\int_{\Sigma}\left(\sum_{k=1}^{m}\left|II(e_{k},\omega^\sharp)\right|^2+\sum_{k=1}^{m}\left|II(e_{k},N)\right|^2|\omega|^2\right)e^{-f}d\mathrm{vol}_{\Sigma}\label{eq_intcurvassumpgen}\\
<&\int_{\Sigma}\left(\mathrm{Ric}_{f}^{M}(N, N)|\omega|^2+\,\mathrm{Ric}_{f}^{M}(\omega^\sharp,\omega^\sharp)-\left\langle\,R^{M}(\omega^\sharp,N)\omega^\sharp,N\right\rangle\right)e^{-f}d\mathrm{vol}_{\Sigma}+\eta\int_\Sigma|\omega|^2e^{-f}d\mathrm{vol}_{\Sigma}, \nonumber
\end{align}
where $II$ denotes the second fundamental form of $M^{m+1}$ in $\mathbb{R}^{d}$, $\left\{e_{1},\ldots, e_{m}\right\}$ is a local orthonormal frame on $\Sigma$, and $N$ is a unit normal vector field on $\Sigma$. Then
\[
\sharp\{\textrm{eigenvalues of $L_f$}<\eta\}\geq \frac{2}{d(d-1)}q.
\]
\end{theorem}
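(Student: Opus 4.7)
The plan is to adapt the Ambrozio--Carlotto--Sharp test--function scheme from \cite{ACS}, already transplanted to the weighted Euclidean setting in \cite{IRS}, to the present general weighted ambient $M\hookrightarrow\mathbb{R}^d$. For each $\omega\in\mathcal{V}$ and each skew--symmetric $E\in\mathfrak{so}(d)$ I would set
\[
u_{\omega,E}\;=\;\langle E\omega^\sharp, N\rangle_{\mathbb{R}^d},
\]
viewing $\omega^\sharp$ and $N$ as $\mathbb{R}^d$--valued through the embeddings $\Sigma\subset M\subset\mathbb{R}^d$, and I would fix an orthonormal basis $\{E_\alpha\}_{\alpha=1}^{d(d-1)/2}$ of $\mathfrak{so}(d)$. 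The argument is by contradiction: assume the number of eigenvalues of $L_f$ below $\eta$ is strictly less than $\frac{2q}{d(d-1)}$, denote by $V_\eta\subset L^2_f(\Sigma)$ the span of the corresponding eigenfunctions and by $P_\eta$ the $L^2_f$--orthogonal projection onto it. The linear map
\[
\Phi:\mathcal{V}\longrightarrow V_\eta^{\,d(d-1)/2},\qquad \Phi(\omega)=(P_\eta u_{\omega,E_\alpha})_{\alpha},
\]
has codomain of dimension strictly smaller than $q=\dim\mathcal{V}$, hence a nonzero $\omega\in\ker\Phi$ exists. For this $\omega$ every $u_{\omega,E_\alpha}$ is $L^2_f$--orthogonal to $V_\eta$, which gives $Q_f(u_{\omega,E_\alpha},u_{\omega,E_\alpha})\geq\eta\int_\Sigma u_{\omega,E_\alpha}^2 e^{-f}d\mathrm{vol}_\Sigma$ for each $\alpha$.

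The algebraic backbone is the elementary identity
\[
\sum_\alpha\langle E_\alpha a,b\rangle\langle E_\alpha c,d\rangle=\tfrac{1}{2}\bigl(\langle a,c\rangle\langle b,d\rangle-\langle a,d\rangle\langle b,c\rangle\bigr),
\]
applied with $(a,c)=\omega^\sharp$, $(b,d)=N$ and the orthogonality $\omega^\sharp\perp N$ in $\mathbb{R}^d$, which gives $\sum_\alpha u_{\omega,E_\alpha}^2=\tfrac12|\omega|^2$. Summing the eigenvalue inequality over $\alpha$ therefore yields $\sum_\alpha Q_f(u_{\omega,E_\alpha},u_{\omega,E_\alpha})\geq\tfrac{\eta}{2}\int_\Sigma|\omega|^2 e^{-f}d\mathrm{vol}_\Sigma$, and the whole proof reduces to checking that \eqref{eq_intcurvassumpgen} forces the opposite strict inequality.

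To compute $\sum_\alpha Q_f(u_{\omega,E_\alpha},u_{\omega,E_\alpha})$ explicitly, I expand $e_k(u_{\omega,E})$ by the Euclidean product rule combined with the chained Gauss decompositions $\nabla^{\mathbb{R}^d}_X Y=\nabla^M_X Y+II(X,Y)$ and $\nabla^M_X Y=\nabla^\Sigma_X Y+\langle AX,Y\rangle N$; skewness of $E$ kills $\langle EN,N\rangle$ and thus the $\langle Ae_k,\omega^\sharp\rangle N$ contribution, leaving four terms of the schematic form $\langle E(\,\cdot\,),N\rangle$ or $\langle E\omega^\sharp,\,\cdot\,\rangle$ with the ``dots'' equal to $\nabla^\Sigma_{e_k}\omega^\sharp$, $Ae_k$, $II(e_k,\omega^\sharp)$ and $II(e_k,N)$. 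Squaring, summing over $\alpha$ via the identity above, and using the orthogonalities $T\Sigma\perp N$ inside $TM$ and $TM\perp(TM)^\perp$ inside $\mathbb{R}^d$ to annihilate the cross terms, I expect to find
\[
2\sum_\alpha|\nabla u_{\omega,E_\alpha}|^2 \;=\; |\nabla\omega|^2+|\omega|^2|A|^2-|A\omega^\sharp|^2+\sum_{k=1}^{m}|II(e_k,\omega^\sharp)|^2+|\omega|^2\sum_{k=1}^{m}|II(e_k,N)|^2,
\]
while the potential part contributes $(\mathrm{Ric}^M_f(N,N)+|A|^2)|\omega|^2$, so the $|A|^2|\omega|^2$ pieces cancel. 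The weighted Bochner identity $\int|\nabla\omega|^2 e^{-f}=-\int \mathrm{Ric}_f^\Sigma(\omega^\sharp,\omega^\sharp) e^{-f}$ for $f$--harmonic 1-forms, combined with the weighted Gauss equation for $\mathrm{Ric}_f^\Sigma(\omega^\sharp,\omega^\sharp)$ in which the $H\langle A\omega^\sharp,\omega^\sharp\rangle$ contributions from $\mathrm{Hess}^\Sigma f$ and from Gauss cancel thanks to the $f$--minimality relation $\partial_N f=-H$, should then convert $\int|\nabla\omega|^2 e^{-f}$ into a combination of $\mathrm{Ric}_f^M(\omega^\sharp,\omega^\sharp)$, $\langle R^M(\omega^\sharp,N)\omega^\sharp,N\rangle$ and $|A\omega^\sharp|^2$, absorbing the residual $|A\omega^\sharp|^2$ and producing exactly the integrands of \eqref{eq_intcurvassumpgen}. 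The strict inequality \eqref{eq_intcurvassumpgen} then gives $\sum_\alpha Q_f(u_{\omega,E_\alpha},u_{\omega,E_\alpha})<\tfrac{\eta}{2}\int_\Sigma|\omega|^2 e^{-f}$ and closes the contradiction.

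The hard part will be this last bookkeeping step: the double Gauss decomposition of $|\nabla u_{\omega,E}|^2$, its summation via the $\mathfrak{so}(d)$--identity, and the interplay of the weighted Bochner formula with the weighted Gauss equation for the Bakry--\'Emery Ricci tensor of $\Sigma$. All the novelty with respect to the Euclidean ambient of \cite{IRS} is concentrated here, in the presence of the extra second fundamental form $II$ of $M\subset\mathbb{R}^d$ and of the intrinsic sectional-curvature--type term $\langle R^M(\omega^\sharp,N)\omega^\sharp,N\rangle$; the delicate point is to apply $\partial_N f=-H$ exactly when needed to cancel the mean-curvature cross-terms and to recognise, among the many summands produced by the computation, precisely the integrands of \eqref{eq_intcurvassumpgen}.
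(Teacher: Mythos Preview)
Your proposal is correct and follows essentially the same approach as the paper. The only difference is notational: the paper packages the test functions as $u_{ij}=\langle N^\flat\wedge\omega,\theta_{ij}\rangle$ with $\{\theta_{ij}\}$ an orthonormal basis of $\bigwedge^2\mathbb{R}^d$, whereas you use $u_{\omega,E_\alpha}=\langle E_\alpha\omega^\sharp,N\rangle$ with $\{E_\alpha\}$ an orthonormal basis of $\mathfrak{so}(d)$; the resulting factor $\tfrac12$ in your $\mathfrak{so}(d)$ identity (so that $\sum_\alpha u_{\omega,E_\alpha}^2=\tfrac12|\omega|^2$ instead of the paper's $\sum_{i<j}u_{ij}^2=|\omega|^2$) is harmless since it appears on both sides, and your gradient computation, Bochner--Gauss bookkeeping, and explicit use of $\partial_N f=-H$ to cancel the $H\langle A\omega^\sharp,\omega^\sharp\rangle$ cross term reproduce exactly the paper's formula and contradiction.
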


As a corollary, when $\eta=0$, we obtain the following $f$-index estimate. Recall that by the Hodge decomposition in the weighted setting (see \cite{Bue}) the dimension of the space of $f$-harmonic $1$-forms still equals the first Betti number $b_{1}(\Sigma)$ of $\Sigma$.

\begin{corollary}\label{maincorfminimal}
Let $M_{f}$ be an $(m+1)$-dimensional Riemannian manifold that is isometrically embedded in some Euclidean space $\mathbb{R}^{d}$, $d\geq m+1$. Let $\Sigma^{m}$ be a closed isometrically immersed $f$-minimal hypersurface of $M^{m+1}$. Assume that for any non-zero $f$-harmonic $1$-form $\omega$ on $\Sigma$,
\begin{align}
&\int_{\Sigma}\left(\sum_{k=1}^{m}\left|II(e_{k},\omega^{\sharp})\right|^2+\sum_{k=1}^{m}\left|II(e_{k},N)\right|^2|\omega|^2\right)e^{-f}d\mathrm{vol}_{\Sigma}\label{eq_intcurvassump}\\&<\int_{\Sigma}\left(\mathrm{Ric}^{M}_{f}(N, N)|\omega|^2+\mathrm{Ric}^{M}_{f}(\omega^{\sharp},\omega^{\sharp})-\left\langle R^{M}(\omega^{\sharp},N)\omega^{\sharp},N\right\rangle\right)e^{-f}d\mathrm{vol}_{\Sigma}.\nonumber
\end{align}
Then
\[
\ \mathrm{Ind}_{f}(\Sigma)\geq \frac{2}{d(d-1)}b_{1}(\Sigma).
\]
\end{corollary}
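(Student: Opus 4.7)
The plan is to deduce the corollary as a direct application of Theorem \ref{mainthmfminimalgen} with $\eta=0$, so there is essentially no new analysis to perform; everything is reduced to choosing the right test space of $f$-harmonic $1$-forms and identifying the relevant dimension.

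First I would set $\mathcal{V}$ equal to the full space of $f$-harmonic $1$-forms on the compact hypersurface $\Sigma$. By the weighted Hodge decomposition of Bueler recalled in the introduction, this space has dimension $q=b_{1}(\Sigma)$. The assumption \eqref{eq_intcurvassump} is precisely the hypothesis \eqref{eq_intcurvassumpgen} of Theorem \ref{mainthmfminimalgen} in the special case $\eta=0$, and it is required to hold for every non-zero $\omega\in\mathcal{V}$. Hence the hypotheses of the theorem are fulfilled with this choice of $\mathcal{V}$ and with $\eta=0$.

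Next I would invoke Theorem \ref{mainthmfminimalgen} to conclude that
\[
\sharp\{\text{eigenvalues of } L_{f}<0\}\;\geq\;\frac{2}{d(d-1)}\,q\;=\;\frac{2}{d(d-1)}\,b_{1}(\Sigma).
\]
Since, by definition, $\mathrm{Ind}_{f}(\Sigma)$ counts exactly the negative eigenvalues of the self-adjoint elliptic operator $L_{f}$ on the closed weighted manifold $(\Sigma,e^{-f}d\mathrm{vol}_{\Sigma})$, the left-hand side coincides with $\mathrm{Ind}_{f}(\Sigma)$ and the claimed estimate follows.

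The only point that requires a moment's care is the identification $\dim\mathcal{V}=b_{1}(\Sigma)$; in the weighted setting this is non-trivial a priori because the relevant harmonic forms are $f$-harmonic rather than harmonic in the usual sense, but it is supplied by the weighted Hodge theorem cited in the excerpt. Apart from this, there is no real obstacle: the corollary is just the qualitative reformulation of Theorem \ref{mainthmfminimalgen} in the borderline case $\eta=0$, where ``eigenvalues $<\eta$'' becomes ``negative eigenvalues'', i.e. the $f$-index.
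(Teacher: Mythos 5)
Your proposal is correct and is exactly the argument the paper intends: the corollary is obtained by applying Theorem \ref{mainthmfminimalgen} with $\eta=0$ and $\mathcal{V}=\mathcal{H}^{1}_{f}(\Sigma)$, whose dimension equals $b_{1}(\Sigma)$ by the weighted Hodge decomposition of Bueler, so that the eigenvalue count below $\eta=0$ is precisely $\mathrm{Ind}_{f}(\Sigma)$. The paper gives no separate proof beyond this observation, so your write-up matches its approach.
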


\begin{remark}
\rm{It should be pointed out that P. Zhu and W. Gan, \cite{ZG} also obtained an estimate in the same spirit of Corollary \ref{maincorfminimal} above. In that paper it is proved a similar estimate on the $f$-index assuming a different curvature condition. However, this condition turns out to be less fitting to the weighted setting and difficult to verify in applications. The novelty in our approach, first considered in \cite{IRS}, is that we use $f$-harmonic forms, instead of usual harmonic forms.}
\end{remark}
%In particular we recover the $f$-index bound we proved in \cite{IRS} for compact $f$-minimal hypersurface in $\left(\real{m+1}, g_{can}, e^{-f}d\mu\right)$ with $\mathrm{Ric}_{f}>0$. Actually, a for self-shrinkers in $\real{3}$ we are able (??) to improve the estimate as follows:
%\begin{corollary}\label{coroindss}  
%Let $\Sigma$ be a compact self-shrinker isometrically immersed in the Gaussian space $\left(\real{3}, g_{\mathrm{can}}, e^{-f}d\mu\right)$. Then, letting $g=\mathrm{genus}(\Sigma)$,
%\[
%\ \mathrm{Ind}_{f}(\Sigma)\geq \frac{2g}{3}+4.
%\]
%\end{corollary}
%
%\begin{remark}
%\rm{The self-shrinker sphere $\sphere{2}_{\sqrt{2}}$ in $\real{3}$ has $\mathrm{Ind}_{f}(\Sigma)=4$ and genus $g=0$. This shows the sharpness of Corollary \ref{coroindss} above.}
%\end{remark}
\subsection{Applications}
Recently there has been some interest in coupling two geometric flows in order to improve geometric and analytic properties of the flows with respect to those they had as themselves.  One case of particular interest is that of the mean curvature flow of hypersurfaces in a moving ambient space.  When the ambient manifold is evolving by the Ricci flow, the coupled evolution is known as Ricci-mean curvature flow. Inspired by Huisken's monotonicity formula for hypersurfaces in the Gaussian shrinker, A. Magni, C. Mantegazza, and E. Tsatis, \cite{MMT}, and J. Lott, \cite{L}, studied monotonicity formulas for mean curvature flow in a gradient Ricci soliton background and introduced the concept of mean curvature soliton which can be regarded as a generalization of self-shrinkers for the mean curvature flow in the Euclidean space. By its definition, a mean curvature soliton is nothing but an $f$-minimal hypersurface isometrically immersed in a gradient Ricci soliton with $f$ being the potential function of the ambient soliton. This fact motivated some recent works on $f$-minimal hypersurfaces in gradient Ricci solitons; see e.g. \cite{CZ}, \cite{CMZ}, where it is considered the particular case of $f$-minimal hypersurfaces in the cylindric shrinking gradient Ricci soliton with one dimensional Euclidean factor $(\mathbb{S}^{m}(\sqrt{(m-1)/\lambda}) \times\mathbb{R}, g, \nabla f)$, with $g$ the product metric, $f(x,t)=\frac{\lambda}{2}t^2$, and $\lambda>0$.
\medskip

As a first application of our main result, we can obtain the following result concerning $f$-minimal hypersurfaces in all cylindric shrinking gradient Ricci solitons. In the case of $f$-minimal hypersurfaces in the cylindric shrinking gradient Ricci soliton with one-dimensional Euclidean factor, a similar estimate was obtained in \cite{ZG}, but assuming an additional curvature assumption.

\begin{corollary}\label{coroindcyl1}  
Let $\Sigma$ be a closed isometrically immersed $f$-minimal hypersurface in the weighted manifold $M_{f}^{m+1}=\left(\mathbb{S}^{k}(\sqrt{\frac{(k-1)}{\lambda}})\times\mathbb{R}^{m-k+1},g_M, e^{-f}d\mathrm{vol}_M\right)$, where $2\leq k\leq m$, $\lambda>0$, $g_M=g_{\mathbb{S}^{k}(\sqrt{\frac{(k-1)}{\lambda}})}+dt_1^2+\dots+dt_{m-k+1}^2$, $g_{\mathbb{S}^{k}(\sqrt{\frac{(k-1)}{\lambda}})}$ denotes the canonical round metric on the sphere, $d\mathrm{vol}_M$ the Riemannian volume measure, and $f(x,t)=\frac{\lambda}{2}|t|^2$. Then
\begin{equation}\label{Estcoro}
\mathrm{Ind}_{f}(\Sigma)\geq \frac{2}{(m+2)(m+1)}b_{1}(\Sigma).
\end{equation}
\end{corollary}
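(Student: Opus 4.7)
The plan is to apply Corollary~\ref{maincorfminimal} to the isometric embedding
\[
\iota\colon\mathbb{S}^{k}\bigl(\sqrt{(k-1)/\lambda}\bigr)\times\mathbb{R}^{m-k+1}\hookrightarrow\mathbb{R}^{k+1}\times\mathbb{R}^{m-k+1}\cong\mathbb{R}^{m+2}
\]
obtained by combining the standard round embedding of the spherical factor with the identity on the Euclidean factor. With $d=m+2$ one has $\tfrac{2}{d(d-1)}=\tfrac{2}{(m+2)(m+1)}$, so the proof reduces to verifying the integral inequality~\eqref{eq_intcurvassump} for every non-zero $f$-harmonic $1$-form $\omega$ on $\Sigma$; I aim in fact to show that its integrand is pointwise non-negative and strictly positive wherever $\omega\ne 0$.

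Let $\pi_{s}\colon TM\to T\mathbb{S}^{k}$ be the orthogonal projection onto the spherical factor. Because $M_{f}$ is the rigid shrinking Ricci soliton, $\mathrm{Ric}^{M}_{f}=\lambda g_{M}$ and the two Ricci contributions on the right-hand side of~\eqref{eq_intcurvassump} combine to $2\lambda|\omega|^{2}$. The Euclidean factor $\mathbb{R}^{m-k+1}$ is flat in $M$ and totally geodesic in the ambient $\mathbb{R}^{m+2}$, so both the Riemann tensor of $M$ and the second fundamental form $II$ of $\iota$ factor through $\pi_{s}$; explicitly,
\[
-\langle R^{M}(\omega^{\sharp},N)\omega^{\sharp},N\rangle=\tfrac{\lambda}{k-1}\bigl(|\pi_{s}\omega^{\sharp}|^{2}|\pi_{s}N|^{2}-\langle\pi_{s}N,\pi_{s}\omega^{\sharp}\rangle^{2}\bigr),
\]
and $|II(X,Y)|^{2}=\tfrac{\lambda}{k-1}\langle\pi_{s}X,\pi_{s}Y\rangle^{2}$ for every $X,Y\in T_{p}M$.

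Completing $\{e_{1},\dots,e_{m}\}$ to an orthonormal basis $\{e_{1},\dots,e_{m},N\}$ of $T_{p}M$ and using the self-adjointness of $\pi_{s}$ gives the elementary identity $\sum_{j=1}^{m}\langle\pi_{s}e_{j},V\rangle^{2}=|V|^{2}-\langle\pi_{s}N,V\rangle^{2}$ for $V\in T\mathbb{S}^{k}$. Applying this with $V=\pi_{s}\omega^{\sharp}$ and with $V=\pi_{s}N$ and substituting into~\eqref{eq_intcurvassump}, the cross-terms $\langle\pi_{s}N,\pi_{s}\omega^{\sharp}\rangle^{2}$ cancel and the required pointwise inequality collapses to
\[
(1-|\pi_{s}N|^{2})\bigl(|\pi_{s}\omega^{\sharp}|^{2}+|\pi_{s}N|^{2}|\omega|^{2}\bigr)<2(k-1)|\omega|^{2}.
\]

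Setting $a=|\pi_{s}N|^{2}\in[0,1]$ and recalling $|\pi_{s}\omega^{\sharp}|^{2}\le|\omega|^{2}$, the left-hand side is bounded by $(1-a)(1+a)|\omega|^{2}=(1-a^{2})|\omega|^{2}\le|\omega|^{2}$, while for $k\ge 2$ the right-hand side is at least $2|\omega|^{2}$. Hence the integrand of~\eqref{eq_intcurvassump} is pointwise bounded below by $\tfrac{(2k-3)\lambda}{k-1}|\omega|^{2}\ge\tfrac{\lambda}{k-1}|\omega|^{2}$, which is strictly positive wherever $\omega\ne 0$; integration then produces the strict inequality required to invoke Corollary~\ref{maincorfminimal}, yielding~\eqref{Estcoro}. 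The only real difficulty is the careful bookkeeping of the projection $\pi_{s}$ terms, and the decisive simplification is the cancellation between the $\langle\pi_{s}N,\pi_{s}\omega^{\sharp}\rangle^{2}$ contributions coming from $R^{M}$ and from the $|II(e_{j},\omega^{\sharp})|^{2}$ piece of~\eqref{eq_intcurvassump}.
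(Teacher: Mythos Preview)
Your overall strategy --- embed $M$ into $\mathbb{R}^{m+2}$ and check the pointwise version of \eqref{eq_intcurvassump} --- is exactly the one the paper uses. However, there is a sign error that spoils the $k=2$ case.

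With the paper's curvature convention (stated explicitly in the proof of the corollary, via the Gauss equation $\langle R^{M}(V,W)V,W\rangle=\langle II(V,V),II(W,W)\rangle-|II(V,W)|^{2}$), one has
\[
\langle R^{M}(\omega^{\sharp},N)\omega^{\sharp},N\rangle=\tfrac{\lambda}{k-1}\bigl(|\pi_{s}\omega^{\sharp}|^{2}\,|\pi_{s}N|^{2}-\langle\pi_{s}\omega^{\sharp},\pi_{s}N\rangle^{2}\bigr)\ \ge 0,
\]
so the term $-\langle R^{M}(\omega^{\sharp},N)\omega^{\sharp},N\rangle$ appearing on the right-hand side of \eqref{eq_intcurvassump} is \emph{non-positive}; you wrote it with the opposite sign. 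Consequently the cross-terms $\langle\pi_{s}\omega^{\sharp},\pi_{s}N\rangle^{2}$ do not cancel but rather combine with a new $|\pi_{s}\omega^{\sharp}|^{2}|\pi_{s}N|^{2}$ contribution. Writing $a=|\pi_{s}N|^{2}$, $b=|\pi_{s}\omega^{\sharp}|^{2}$, $c=\langle\pi_{s}\omega^{\sharp},\pi_{s}N\rangle^{2}$ and $w=|\omega|^{2}$, the correct reduced pointwise inequality is
\[
b(1+a)+w\,a(1-a)-2c\ <\ 2(k-1)\,w,
\]
not $(1-a)(b+aw)<2(k-1)w$. Since $b\le w$, $a\in[0,1]$ and $c\ge 0$, the left-hand side is still at most $w(2-(1-a)^{2})\le 2w$, so for $k\ge 3$ your conclusion survives. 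For $k=2$, however, the two sides can be equal (precisely when $(\pi_{\mathbb{R}^{m-1}})_{*}N=0$ and $(\pi_{\mathbb{R}^{m-1}})_{*}\omega^{\sharp}=0$), and your pointwise-strict argument breaks down. The paper closes this gap by observing that if equality held after integration, then $(\pi_{\mathbb{R}^{m-1}})_{*}N$ would vanish identically, forcing all Euclidean directions to be tangent to $\Sigma$ and contradicting compactness. You need to supply this extra step.
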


\begin{remark}
\rm{For closed $f$-minimal hypersurfaces in the cylindric shrinking gradient Ricci soliton with one-dimensional Euclidean factor it was proved in \cite{CMZ} that $\mathrm{Ind}_{f}(\Sigma)\geq 1$ and, indeed, equality holds if and only if $\Sigma=\mathbb{S}^{m-1}\times\left\{0\right\}$. When the first Betti number is large our estimate \eqref{Estcoro} improves this result.}
\end{remark}

\begin{remark}
\rm{Note that in some special situations shrinking gradient Ricci solitons are necessarily of cylindric type. For instance we recall that H. D. Cao,  B.-L. Chen and X.-P Zhu, \cite{CCZ} proved that any $3$-dimensional complete non-compact non-flat shrinking gradient Ricci soliton is a finite quotient of the round cylinder $\mathbb{S}^{2}\times\mathbb{R}$. In higher dimension Z.-H. Zhang proved that any locally conformally flat complete non-compact gradient shrinking Ricci soliton must be a finite quotient of the Gaussian shrinking Ricci soliton $\mathbb{R}^{n}$ or the cylindric shrinking gradient Ricci soliton $\mathbb{S}^{n-1}\times\mathbb{R}$ ; \cite{Zh}.}
\end{remark}

More generally, one could apply Corollary \ref{maincorfminimal} to obtain index bounds for $f$-minimal hypersurfaces in products of almost all other compact rank one symmetric spaces (CROSSes) with an Euclidean factor, endowed with the rigid gradient Ricci soliton structure. Note that the compact  rank one symmetric spaces, i.e. all compact symmetric spaces of positive sectional curvature, are the spheres, $\mathbb{R}\mathbb{P}^{k}$, $\mathbb{C}\mathbb{P}^n$ with $2n=k$, $\mathbb{H}\mathbb{P}^p$ with $4p=k$ and $Ca\mathbb{P}^2$.

\begin{corollary}\label{coroindCROSSxR}
Let $P$ be either $\mathbb{C}\mathbb{P}^n$ with $k=2n\geq 4$, $\mathbb{H}\mathbb{P}^p$ with $k=4p$ or $Ca\mathbb{P}^2$. Let $\Sigma$ be a closed isometrically immersed $f$-minimal hypersurface in the weighted manifold $M_{f}^{m+1}=\left(P^k\times\mathbb{R}^{m-k+1},g_M, e^{-f}d\mathrm{vol}_M\right)$, $g_M=g_{P}+dt_1^2+\dots+dt_{m-k+1}^2$, with $g_{P}$ denoting the canonical metric on $P$, $d\mathrm{vol}_M$ the Riemannian volume measure, and $f(x,t)=\frac{\lambda}{2}|t|^2$. Then 
\[
\ \mathrm{Ind}_{f}(\Sigma)\geq \frac{2}{d(d-1)}b_{1}(\Sigma),
\]
where
\begin{itemize}
\item $d=m+2+2n^2$ if $P=\mathbb{C}\mathbb{P}^n$, $2n=k$;
\item $d=m+2+2p^2-p$ if $P=\mathbb{H}\mathbb{P}^p$, $4p=k$;
\item $d=m+12$ if $P=Ca\mathbb{P}^2$.
\end{itemize}
\end{corollary}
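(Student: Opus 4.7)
The plan is to reduce the statement to Corollary \ref{maincorfminimal} by producing a suitable isometric embedding of the ambient manifold $M^{m+1} = P^k \times \mathbb{R}^{m-k+1}$ into $\mathbb{R}^d$ and then verifying the integral inequality \eqref{eq_intcurvassump} via a pointwise bound.

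\textbf{Embedding and rigid-soliton simplification.} For each CROSS $P$ I would take the canonical equivariant minimal isometric embedding $\iota_P: P \hookrightarrow \mathbb{R}^D$: for $\mathbb{C}\mathbb{P}^n$ and $\mathbb{H}\mathbb{P}^p$ the first standard embeddings via orthogonal line-projectors into the spaces of (quaternion-)Hermitian matrices, and for $Ca\mathbb{P}^2$ the Cartan embedding into the exceptional Jordan algebra. Here $D$ is chosen so that $D + (m-k+1) = d$ for the value of $d$ in the statement. Taking the product with the identity on $\mathbb{R}^{m-k+1}$ produces an isometric embedding $M \hookrightarrow \mathbb{R}^d$ whose second fundamental form $II$ equals $II_P$ on vectors tangent to $P$ and vanishes on vectors tangent to the Euclidean factor. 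The rigid-soliton structure gives $\mathrm{Ric}^P = \lambda g_P$ and $\mathrm{Hess}^M f = \lambda g$ on the Euclidean factor, hence $\mathrm{Ric}^M_f \equiv \lambda g_M$. In particular, the first two Ricci terms on the right hand side of \eqref{eq_intcurvassump} collapse to $2\lambda|\omega|^2$, and the flatness of the Euclidean factor reduces the curvature term to $\langle R^P(\omega^\sharp_P, N_P)\omega^\sharp_P, N_P\rangle$, where the subscript $P$ denotes orthogonal projection onto the CROSS factor.

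\textbf{Pointwise curvature inequality.} The crux is the pointwise bound
\[
\sum_{k=1}^m |II(e_k, \omega^\sharp)|^2 + \sum_{k=1}^m |II(e_k, N)|^2 |\omega|^2 + \langle R^P(\omega^\sharp_P, N_P) \omega^\sharp_P, N_P\rangle \leq 2\lambda|\omega|^2,
\]
with strict inequality somewhere (which follows from positivity of $\lambda$ and non-triviality of $\omega$). I would establish this by combining: (a) the Gauss equation for $P \hookrightarrow \mathbb{R}^D$, which, together with the parallelism of $II_P$, yields an explicit algebraic expression for $|II_P|^2$ in terms of the curvature tensor of $P$; (b) the explicit form of $R^P$ on each CROSS, involving the K\"ahler, quaternion-K\"ahler, or octonionic structure; and (c) the standard $\tfrac{1}{4}$-pinching of sectional curvatures on these spaces. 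Integrating against $e^{-f}d\mathrm{vol}_\Sigma$ then yields \eqref{eq_intcurvassump} for every nonzero $f$-harmonic 1-form, and applying Corollary \ref{maincorfminimal} with $\mathcal{V}$ the full space of $f$-harmonic 1-forms on $\Sigma$, of dimension $b_1(\Sigma)$ by the weighted Hodge decomposition, gives the claim.

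\textbf{Main obstacle.} The technical obstacle is the case-by-case verification of the pointwise inequality; the algebra is heaviest for $Ca\mathbb{P}^2$, where the Cayley multiplication intervenes. However, the passage from the unweighted setting of \cite{ACS} to the weighted one amounts only to inserting the weight $e^{-f}d\mathrm{vol}_\Sigma$ into the integrals and invoking the identity $\mathrm{Ric}^M_f \equiv \lambda g_M$ in place of a pinching condition on $\mathrm{Ric}^M$; the algebraic computations of \cite{ACS} for CROSSes transport over essentially unchanged.
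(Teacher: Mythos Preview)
Your overall architecture is right and matches the paper: embed $P$ via its first standard embedding (so that $|II^P(X,X)|^2=4|X|^4$), extend trivially by the Euclidean factor, use $\mathrm{Ric}^M_f=\lambda g_M$, and reduce \eqref{eq_intcurvassump} to a pointwise sign for
\[
\mathcal{R}(\omega^\sharp,N)=\sum_h\bigl(|II(e_h,\omega^\sharp)|^2+|II(e_h,N)|^2|\omega|^2\bigr)+\langle R^M(\omega^\sharp,N)\omega^\sharp,N\rangle-2\lambda|\omega|^2.
\]
For $\mathbb{HP}^p$ and $Ca\mathbb{P}^2$ this is strictly negative pointwise wherever $\omega\neq 0$, since $\lambda=k+8$ and $\lambda=36$ exceed the threshold $k+4$ coming from the algebra; the paper proceeds exactly this way and your sketch covers those two cases.

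The genuine gap is the $\mathbb{CP}^n$ case. There $\lambda=k+2$, and the pointwise estimate only gives $\mathcal{R}(\omega^\sharp,N)\leq 0$, with equality precisely when $\omega^R=N^R=0$ and $\omega^\sharp=h\,JN$ for some function $h$. Your parenthetical ``strict inequality somewhere (which follows from positivity of $\lambda$ and non-triviality of $\omega$)'' is not correct here: nothing in the pointwise algebra rules out a nonzero $f$-harmonic $1$-form of the form $hJN$ lying entirely in the $P$-factor. The paper handles this with a separate lemma: assuming $\omega^\sharp=hJN$ and $\omega^R=N^R=0$ identically, one first observes that $\omega$ is then actually harmonic (since $\langle\omega^\sharp,\nabla f\rangle=0$), derives $h\nabla^M_{JN}N=-J\nabla h$ as in \cite[Lemma~A.1]{ACS}, computes $\mathrm{Ric}_f(\omega^\sharp,\omega^\sharp)=(k-2)|\omega|^2-|\nabla h|^2$, checks $|\nabla h|^2\leq|\nabla\omega|^2$, and feeds all this into the weighted Bochner formula to conclude
\[
0=\int_\Sigma\Bigl(|\nabla\omega|^2+\mathrm{Ric}_f(\omega^\sharp,\omega^\sharp)\Bigr)e^{-f}\,d\mathrm{vol}_\Sigma\geq(k-2)\int_\Sigma|\omega|^2 e^{-f}\,d\mathrm{vol}_\Sigma,
\]
forcing $\omega\equiv 0$ since $k\geq 4$. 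This rigidity step is essential and is not implied by the CROSS algebra you invoke; you need to add it (and note that the passage from harmonic to $f$-harmonic forms requires the observation $\omega^R=0\Rightarrow\delta_f\omega=\delta\omega$, which is specific to the product structure of $f$).

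A minor remark: you single out $Ca\mathbb{P}^2$ as the heaviest case, but in the paper's approach the only CROSS-specific inputs are the identity $|II^P(X,X)|^2=4|X|^4$, the Einstein constant, and the $\tfrac14$-pinching, all of which are uniform; the delicate case is $\mathbb{CP}^n$, not $Ca\mathbb{P}^2$.
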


\begin{remark}
\rm{Corollary \ref{maincorfminimal} does not seem to apply directly to the case $P=\mathbb{RP}^{k}$, since in this case we cannot produce an immersion of $P\times\mathbb{R}^{m+k-1}$ satisfying the integral curvature assumption \eqref{eq_intcurvassump}. This difficulty is already encountered in the classical non-weighted case but, in that case, one could overcome this by using the correspondence between immersed minimal hypersurfaces in the real projective space and immersed hypersurfaces of the sphere that are invariant under the antipodal map; see \cite{ACS}. However, this kind of reasoning seems difficult to apply in our setting. More generally, it would be interesting to prove an analogous index estimate for closed $f$-minimal hypersurfaces in all the so called \textsl{rigid} shrinking Ricci solitons i.e. quotients $N\times_{\Gamma}\mathbb{R}^{k}$ with $N$ Einstein with Einstein constant $\lambda$ and $\Gamma$ acting freely on $N$ and by orthogonal transformations on $\mathbb{R}^{k}$, with $f=\frac{\lambda}{2}d^2$ and $d$ is the distance in the flat fibers to the base; see \cite{PW_Rig}. The obvious difficulty is that in general we do not have such a good control on the embedding}.
\end{remark}

In another direction, a further situation in which we can apply Corollary \ref{maincorfminimal} is that in which the ambient weighted manifold is a convex hypersurface of the Euclidean space with sufficiently pinched principal curvatures. Indeed minor modifications to the proof of Theorem 12 in \cite{ACS} permit to obtain the following result.

\begin{corollary}\label{CorPinched}
Let $M^{m+1}_{f}$ be a weighted manifold isometrically embedded as an hypersurface in $\mathbb{R}^{m+2}$ in such a way that the principal curvatures $k_{1}\leq\ldots\leq k_{m+1}$ with respect to the outward pointing unit normal $\nu$ are positive and satisfy the pinching condition
\[
\ \frac{k_{m+1}}{k_{1}}<\sqrt{\frac{m+1}{2}}.
\]
Assume that $\mathrm{Hess}^{M}f>0$. Then every closed embedded minimal $f$-hypersurface $\Sigma^m$ of $M^{m+1}_{f}$ is such that
\[
\ \mathrm{Ind}_{f}(\Sigma)\geq \frac{2}{(m+2)(m+1)}b_{1}(\Sigma).
\]
\end{corollary}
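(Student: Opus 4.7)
The plan is to verify the pointwise version of the integral assumption \eqref{eq_intcurvassump} of Corollary \ref{maincorfminimal} with ambient dimension $d=m+2$, and then appeal directly to that corollary. Let $B\colon TM\to TM$ denote the symmetric shape operator of $M^{m+1}\hookrightarrow \mathbb{R}^{m+2}$ associated with $\nu$, so that $II(X,Y)=\langle BX,Y\rangle\nu$, $B$ has eigenvalues $0<k_{1}\leq\dots\leq k_{m+1}$, and $H_{M}=\mathrm{tr}(B)=\sum_i k_i$. Completing a local $\Sigma$-adapted orthonormal frame $\{e_{1},\dots,e_{m}\}$ by $N$ to an orthonormal basis of $T_pM$, symmetry of $B$ yields the pointwise identities
\[
\sum_{k=1}^{m}|II(e_{k},\omega^{\sharp})|^{2}=|B\omega^{\sharp}|^{2}-\langle BN,\omega^{\sharp}\rangle^{2},\qquad \sum_{k=1}^{m}|II(e_{k},N)|^{2}=|BN|^{2}-\langle BN,N\rangle^{2}.
\]
From the Gauss equation for $M\subset\mathbb{R}^{m+2}$ one also has $\mathrm{Ric}^{M}(X,X)=H_{M}\langle BX,X\rangle-|BX|^{2}$ and $\langle R^{M}(\omega^{\sharp},N)\omega^{\sharp},N\rangle=\langle BN,\omega^{\sharp}\rangle^{2}-\langle B\omega^{\sharp},\omega^{\sharp}\rangle\langle BN,N\rangle$.

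Substituting these into \eqref{eq_intcurvassump} and keeping only the unweighted part of $\mathrm{Ric}^{M}_{f}=\mathrm{Ric}^{M}+\mathrm{Hess}^{M}f$, a direct algebraic simplification (in which the two $\langle BN,\omega^{\sharp}\rangle^{2}$ terms cancel) shows that the pointwise difference between the left-hand side integrand and the curvature portion of the right-hand side integrand equals
\[
\mathcal{D}:=2|B\omega^{\sharp}|^{2}+2|BN|^{2}|\omega|^{2}-\langle BN,N\rangle^{2}|\omega|^{2}-H_{M}\big(\langle BN,N\rangle|\omega|^{2}+\langle B\omega^{\sharp},\omega^{\sharp}\rangle\big)-\langle B\omega^{\sharp},\omega^{\sharp}\rangle\langle BN,N\rangle.
\]
The summands $-\langle BN,N\rangle^{2}|\omega|^{2}$ and $-\langle B\omega^{\sharp},\omega^{\sharp}\rangle\langle BN,N\rangle$ are nonpositive by positivity of $B$, hence they can be dropped when estimating $\mathcal{D}$ from above. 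The bounds $|Bv|^{2}\leq k_{m+1}^{2}|v|^{2}$, $\langle Bv,v\rangle\geq k_{1}|v|^{2}$ and $H_{M}\geq (m+1)k_{1}$ then give
\[
\mathcal{D}\leq \big(4k_{m+1}^{2}-2(m+1)k_{1}^{2}\big)|\omega|^{2},
\]
which is strictly negative wherever $\omega\neq 0$ precisely under the pinching hypothesis $k_{m+1}/k_{1}<\sqrt{(m+1)/2}$.

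Finally, the convexity assumption $\mathrm{Hess}^{M}f>0$ ensures that reinstating the $\mathrm{Hess}^{M}f$ contribution to the right-hand side of \eqref{eq_intcurvassump} only makes it larger, so the strict pointwise inequality persists after incorporating the full Bakry-\'Emery tensor. Integrating against $e^{-f}d\mathrm{vol}_{\Sigma}$ yields \eqref{eq_intcurvassump} for every nonzero $f$-harmonic $1$-form $\omega$, and Corollary \ref{maincorfminimal} with $d=m+2$ delivers the claim. The main technical obstacle is the algebraic manipulation leading to the closed form for $\mathcal{D}$ and the verification that $\sqrt{(m+1)/2}$ is precisely the sharp threshold making $\mathcal{D}<0$ after the negative summands are discarded; everything else is a direct adaptation of the argument in Theorem 12 of \cite{ACS}, with the strict convexity of the potential turning the Bakry-\'Emery correction into a helpful term rather than a problematic one.
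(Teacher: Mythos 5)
Your overall strategy --- verifying a pointwise version of \eqref{eq_intcurvassump} with $d=m+2$, observing that $\mathrm{Hess}^{M}f>0$ only enlarges the right-hand side, and invoking Corollary \ref{maincorfminimal} --- is exactly the intended route (the paper itself only asserts that the proof is a minor modification of Theorem 12 in \cite{ACS}). Your identities for $\sum_{k}|II(e_{k},\omega^{\sharp})|^{2}$, $\sum_{k}|II(e_{k},N)|^{2}$ and for $\mathrm{Ric}^{M}$ are correct. However, your Gauss-equation formula for the ambient curvature term has the wrong sign relative to the convention used throughout this paper: from the proof of Corollary \ref{coroindcyl1} one reads $\langle R^{M}(V,W)V,W\rangle=\langle II(V,V),II(W,W)\rangle-|II(V,W)|^{2}$, and the Bochner step in the proof of Theorem \ref{mainthmfminimalgen} confirms that $\langle R^{M}(\omega^{\sharp},N)\omega^{\sharp},N\rangle$ is the sectional-curvature numerator, hence here equals $\langle B\omega^{\sharp},\omega^{\sharp}\rangle\langle BN,N\rangle-\langle B\omega^{\sharp},N\rangle^{2}\geq 0$ for a convex hypersurface --- the opposite of what you wrote. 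With the correct sign the pointwise difference is
\[
\mathcal{D}=2|B\omega^{\sharp}|^{2}+2|BN|^{2}|\omega|^{2}-2\langle B\omega^{\sharp},N\rangle^{2}-\langle BN,N\rangle^{2}|\omega|^{2}+\langle B\omega^{\sharp},\omega^{\sharp}\rangle\langle BN,N\rangle-H_{M}\bigl(\langle BN,N\rangle|\omega|^{2}+\langle B\omega^{\sharp},\omega^{\sharp}\rangle\bigr),
\]
so the product $\langle B\omega^{\sharp},\omega^{\sharp}\rangle\langle BN,N\rangle$ enters with a \emph{plus} sign. The step in which you discard it as ``nonpositive'' is therefore the one that fails.

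The error is consequential: after correcting the sign, your term-by-term scheme (bound each summand by its extreme eigenvalue) yields at best $\mathcal{D}\leq\bigl(5k_{m+1}^{2}-(2m+3)k_{1}^{2}\bigr)|\omega|^{2}$, which requires the strictly stronger pinching $k_{m+1}/k_{1}<\sqrt{(2m+3)/5}$ for $m\geq 2$; the threshold $\sqrt{(m+1)/2}$ is not reached this way. The repair is the grouping used in \cite{ACS}: since $\mathrm{Ric}^{M}(N,N)|\omega|^{2}+\mathrm{Ric}^{M}(\omega^{\sharp},\omega^{\sharp})-\langle R^{M}(\omega^{\sharp},N)\omega^{\sharp},N\rangle=\sum_{k}\bigl(\langle R^{M}(e_{k},N)e_{k},N\rangle|\omega|^{2}+\langle R^{M}(e_{k},\omega^{\sharp})e_{k},\omega^{\sharp}\rangle\bigr)$, one gets
\[
\mathcal{D}=\sum_{k=1}^{m}\Bigl(2\langle Be_{k},\omega^{\sharp}\rangle^{2}-\langle Be_{k},e_{k}\rangle\langle B\omega^{\sharp},\omega^{\sharp}\rangle\Bigr)+\sum_{k=1}^{m}\Bigl(2\langle Be_{k},N\rangle^{2}-\langle Be_{k},e_{k}\rangle\langle BN,N\rangle\Bigr)|\omega|^{2},
\]
and then $\sum_{k}\langle Be_{k},\omega^{\sharp}\rangle^{2}\leq k_{m+1}^{2}|\omega|^{2}$, $\sum_{k}\langle Be_{k},N\rangle^{2}=|BN|^{2}-\langle BN,N\rangle^{2}\leq k_{m+1}^{2}-k_{1}^{2}$ and $\sum_{k}\langle Be_{k},e_{k}\rangle\geq mk_{1}$ give the two brackets the respective bounds $(2k_{m+1}^{2}-mk_{1}^{2})|\omega|^{2}$ and $(2k_{m+1}^{2}-(m+2)k_{1}^{2})|\omega|^{2}$, whose sum is $2\bigl(2k_{m+1}^{2}-(m+1)k_{1}^{2}\bigr)|\omega|^{2}<0$ exactly under the stated pinching. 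So your final displayed bound $\mathcal{D}\leq\bigl(4k_{m+1}^{2}-2(m+1)k_{1}^{2}\bigr)|\omega|^{2}$ is in fact true, but only via this finer grouping, not via the derivation you give; the rest of your argument (the role of $\mathrm{Hess}^{M}f>0$ and the application of Corollary \ref{maincorfminimal}) is fine.
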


\section{Proof of the main estimate}
%Before getting into the proof of Theorem \ref{mainthmfminimalgen}, we recall the following
%\begin{lemma}[Proposition 2 in \cite{ChengZhou_Index}]\label{lemma_eigenfunc}
%Let $(M^{m+1}, g_{M}, e^{-f}d\mathrm{vol}_{M})$ be a Riemannian manifold and let $\Sigma^{m}$ be a complete isometrically immersed $f$-minimal hypersurface of $M^{m+1}$. The following are equivalent:
%\begin{itemize}
%\item[(i)] $\Sigma$ has finite $f$-index I;
%\item[(ii)] There exists a finite dimensional subspace $W$ of $L^2(\Sigma_f)$ having an orthonormal basis $\varphi_1,\dots,\varphi_I$ consisting of eigenfunctions with eigenvalues $\lambda_1,\dots,\lambda_I$. Each $\lambda_k$ is negative and for $\phi\in C^{\infty}_0(\Sigma)\cap W^\bot$ it holds $Q_f(\phi,\phi)\geq 0$, where
%
%\[Q_f(\phi,\phi):=\int_\Sigma |\nabla \phi|^2-( ^M\mathrm{Ric}_f(\nu,\nu)+|A|^2)|\phi|^2d\mathrm{vol}_{f, \Sigma}.
%\] 
%\end{itemize}
%\end{lemma}

\begin{proof}[Proof of Theorem \ref{mainthmfminimalgen}]
Let $k$ be the number of the eigenvalues of the Jacobi operator $L_f$ that are below the threshold $\eta$ and denote by $\varphi_1,\cdots,\varphi_k$ the eigenfunctions associated to the $k$ eigenvalues $\lambda_1,\cdots,\lambda_k$. 
Let $\omega$ be a $f$-harmonic $1$-form in $\mathcal{V}\subset V\doteq\mathcal{H}^{1}_{f}(\Sigma)$ and set $u_{ij}\doteq\langle N^{\flat}\wedge\omega, \theta_{ij}\rangle$, where $\{\theta_{ij}\}_{i<j}$ is an orthonormal basis of $\bigwedge^2\mathbb{R}^{d}$. Then the map
\begin{align*}
\mathcal{V} &\rightarrow \mathbb{R}^{\frac{d(d-1)}{2}\,k}\\
\omega & \mapsto \left(\int_\Sigma u_{ij}\varphi_p\,e^{-f}d\mathrm{vol}_{\Sigma}\right),
\end{align*}
for $1\leq i<j\leq d$ and $1\leq p\leq k$, is a linear map.

Assume by contradiction that $q>\frac{d(d-1)}{2}k$. Then there exists a non-zero $\omega\in \mathcal{V}$ such that
\[
\int_\Sigma u_{ij}\varphi_pe^{-f}d\mathrm{vol}_{\Sigma}=0
\]
for all $i<j$ and for all $1\leq p\leq k$. Thus, in particular
%Thus, by Lemma \ref{lemma_eigenfunc}, $u_{ij}\in C^{\infty}(\Sigma)_0\cap W^{\bot}$ and $Q_f(u_{ij}, u_{ij})\geq 0$. Hence, in particular,
\begin{align*}
\eta\int_\Sigma |\omega|^2e^{-f}d\mathrm{vol}_{\Sigma}&\leq \lambda_{k+1}\int_\Sigma |\omega|^2e^{-f}d\mathrm{vol}_{\Sigma}\\
&=\lambda_{k+1}\sum_{i<j}^d \int_\Sigma u_{ij}^2e^{-f}d\mathrm{vol}_{\Sigma}\\
&\leq\sum_{i<j}^d Q_f(u_{ij},u_{ij}).
\end{align*}
On the other hand, given a local orthonormal frame $\{e_1,\dots, e_m\}$ on $\Sigma^m$ and denoting by $D$ the connection of $\mathbb{R}^{d}$, one has
\[
|\nabla u_{ij}|^2 =\sum_{k=1}^m |D_{e_k} u_{ij}|^2=\sum_{k=1}^m \langle D_{e_k}(N^{\flat}\wedge \omega), \theta_{ij}\rangle^2\\
\]
Hence, summing up on $i<j$,
\begin{align}
\sum_{i<j}^d |\nabla u_{ij}|^2=&\sum_{k=1}^m |D_{e_k} (N^\flat\wedge \omega)|^2.\label{Star1}
\end{align}
Note that:
\begin{align*}
|D_{e_{k}}(N^{\flat}\wedge\omega)|^2=&|D_{e_{k}}N^{\flat}\wedge\omega+N^{\flat}\wedge D_{e_{k}}\omega|^2\\
=&|D_{e_{k}}N|^2|\omega|^2+|D_{e_{k}}\omega^{\sharp}|^2-\langle\omega^{\sharp}, D_{e_{k}}N\rangle^2-\langle D_{e_{k}}\omega^{\sharp}, N\rangle^2.
\end{align*}
Recall the orthogonal decompositions:
\begin{align*}
D_{e_{k}}N=&\nabla^{M}_{e_{k}}N+II(e_{k}, N)=-Ae_{k}+II(e_{k}, N);\\
D_{e_{k}}\omega^{\sharp}=&\nabla_{e_{k}}\omega^{\sharp}+\langle A\omega^{\sharp}, e_{k}\rangle N+II(e_{k}, \omega^{\sharp}).
\end{align*}
As a consequence of the relations above one has:
\begin{align*}
|D_{e_{k}}N|^2=&|Ae_{k}|^2+|II(e_{k}, N)|^{2};\\
|D_{e_{k}}\omega^{\sharp}|^2=&|\nabla_{e_{k}}\omega^{\sharp}|^2+\langle A\omega^{\sharp}, e_{k}\rangle^{2}+|II(e_{k}, \omega^{\sharp})|^2;\\
\langle D_{e_{k}}N, \omega^{\sharp}\rangle=&-\langle A\omega^{\sharp}, e_{k}\rangle;\\
\langle D_{e_{k}}\omega^{\sharp}, N\rangle=&\langle A\omega^{\sharp}, e_{k}\rangle.
\end{align*}
Inserting the previous relations in \eqref{Star1},
\begin{align}
\sum_{i<j}^{d}|\nabla u_{ij}|^2=&|\nabla \omega|^2-|A\omega^\sharp|^2+|A|^2|\omega|^2\\
&+\sum_{k=1}^m|II(e_k,\omega^\sharp)|^2+\sum_{k=1}^m |II({e_k},N)|^2|\omega|^2.\nonumber
\end{align}
Thus, 
\begin{align}
\sum_{i<j}^d Q_f(u_{ij},u_{ij})=&\sum_{i<j}^d\int_\Sigma |\nabla u_{ij}|^2-( \mathrm{Ric}^{M}_f(N,N)+|A|^2)u_{ij}^2e^{-f}d\mathrm{vol}_{\Sigma}\nonumber\\
=&\int_\Sigma \sum_{k=1}^m|II(e_k,\omega^\sharp)|^2+\sum_{k=1}^m |II({e_k},N)|^2|\omega|^2 e^{-f}d\mathrm{vol}_{\Sigma} \label{eq_Qfuij}\\
&+\int_\Sigma |\nabla \omega|^2-|A\omega^\sharp|^2- \mathrm{Ric}^{M}_f(N,N)|\omega|^2e^{-f}d\mathrm{vol}_{\Sigma}\nonumber.
\end{align}

Since $\omega$ is $f$-harmonic, combining the $f$-Bochner-Weitzenb\"{o}ck formula with the Gauss equation we get
\begin{align*}
-\Delta_f \frac{|\omega|^2}{2}&=|\nabla \omega|^2+\mathrm{Ric}_f(\omega^\sharp,\omega^\sharp)\\
&=|\nabla \omega|^2+ \mathrm{Ric}^{M}_f(\omega^\sharp,\omega^\sharp)-\langle R^{M}(\omega^\sharp,N)\omega^\sharp,N \rangle-|A\omega^\sharp|^2.
\end{align*}
Integrating the previous equation and substituting in \eqref{eq_Qfuij} we thus obtain

\begin{align*}
\eta\int_\Sigma |\omega|^2e^{-f}d\mathrm{vol}_{\Sigma}\leq& \sum_{i<j}^d Q_f(u_{ij},u_{ij})\\
=&\int_\Sigma \sum_{k=1}^m|II(e_k,\omega^\sharp)|^2+\sum_{k=1}^m |II({e_k},N)|^2|\omega|^2 e^{-f}d\mathrm{vol}_{\Sigma}\\
&-\int_\Sigma\ \mathrm{Ric}^{M}_f(\omega^\sharp,\omega^\sharp)-\langle  R^{M}(\omega^\sharp,N)\omega^\sharp,N \rangle+ \mathrm{Ric}^{M}_f(N,N)|\omega|^2 e^{-f}d\mathrm{vol}_{\Sigma}.
\end{align*}

This is in contradiction with the assumption \eqref{eq_intcurvassumpgen}. The result follows.
\end{proof}

\section{$f$-minimal hypersurfaces in shrinking gradient Ricci solitons}
\subsection{Cylindric shrinking gradient Ricci solitons}
\begin{proof}[Proof of Corollary \ref{coroindcyl1}]
Let us denote by $\pi_{\mathbb{S}^{k}}$ and $\pi_{\mathbb{R}^{m-k+1}}$, respectively, the projection on the spherical factor and on the Euclidean factor of $M$, and by $\langle,\rangle$ the metric on $M$. By a straightforward computation one has that
\begin{align*}
 \mathrm{Hess}^{M}f=&\lambda (dt_1^2+\dots+dt_{m-k+1}^2),\\
 \mathrm{Ric}^{M}_f=&\lambda \langle\ ,\ \rangle,\\
\langle R^{M}(V,W)V, W\rangle=& \langle II(V,V),II(W,W)\rangle-|II(V,W)|^2,\\
II(V,W)=& \sqrt{\frac{\lambda}{k-1}}\langle (\pi_{\mathbb{S}^{k}})_*V,(\pi_{\mathbb{S}^{k}})_*W\rangle\nu,
\end{align*}
for any $V,W\in TM$, with $\nu$ normal vector field of $\mathbb{S}^{k}\times\mathbb{R}^{m-k+1}$ in $\mathbb{R}^{m+2}$. Moreover we have that
\begin{equation*}
\sum_{i=1}^{m}\langle(\pi_{\mathbb{S}^{k}})_{*}V, (\pi_{\mathbb{S}^{k}})_{*}e_{i}\rangle^2=|(\pi_{\mathbb{S}^{k}})_{*}V|^2-\langle(\pi_{\mathbb{S}^{k}})_{*}V, (\pi_{\mathbb{S}^{k}})_{*}N\rangle^2.
\end{equation*}
Hence,  for any $X\in T\Sigma$,
\begin{align*}
\sum_{i=1}^m\Big(|II(e_i,X)|^2+|II(e_i,N)|^2|X|^2\Big)=&\frac{\lambda}{k-1}\Big(|(\pi_{\mathbb{S}^{k}})_*X|^2-\langle (\pi_{\mathbb{S}^{k}})_*X,(\pi_{\mathbb{S}^{k}})_*N\rangle^2\\
&+|X|^2|(\pi_{\mathbb{S}^{k}})_*N|^2(1-|(\pi_{\mathbb{S}^{k}})_*N|^2)\Big).
\end{align*}
Moreover,
\[
\langle R^{M}(X,N)X, N\rangle=\frac{\lambda}{k-1}\Big(|(\pi_{\mathbb{S}^{k}})_*X|^2|(\pi_{\mathbb{S}^{k}})_*N|^2-\langle(\pi_{\mathbb{S}^{k}})_*X, (\pi_{\mathbb{S}^{k}})_*N\rangle^2\Big)
\]
Set 
\begin{align*}
\mathcal{R}(X,N)\doteq&\sum_{i=1}^m\Big(|II(e_i,X)|^2+|II(e_i,N)|^2|X|^2\Big)\\
&+\langle R^{M}(X,N)X, N\rangle-\mathrm{Ric}^{M}_{f}(N, N)|X|^2-\mathrm{Ric}_{f}^{M}(X,X)
\end{align*}
Then 
\begin{align*}
\mathcal{R}(X,N)=&-\frac{2\lambda(k-2)}{k-1}|X|^2-\frac{\lambda}{k-1}\Big(|X|^2|(\pi_{\mathbb{R}^{m-k+1}})_*N|^4\\
&+|(\pi_{\mathbb{R}^{m-k+1}})_*X|^2(2-|(\pi_{\mathbb{R}^{m-k+1}})_*N|^2)\\
&+2\langle (\pi_{\mathbb{R}^{m-k+1}})_*N,(\pi_{\mathbb{R}^{m-k+1}})_*X\rangle^2\Big).
\end{align*}
For $k\geq 3$ this expression is strictly negative on non-zero vector fields. In case $k=2$ the expression is non-positive. However, it vanishes if and only if $(\pi_{\mathbb{R}^{m-k+1}})_{*}N=0$, and this is in contradiction with the fact that $\Sigma$ is compact. Hence condition \eqref{eq_intcurvassump} is satisfied and the conclusion follows.
\end{proof}

\subsection{Some other rigid gradient Ricci shrinking solitons}

\begin{proof}[Proof of Corollary \ref{coroindCROSSxR}]Consider $(M^{m+1}, g_{M})$ to be of the form $M^{m+1}=P^{k}\times\mathbb{R}^{m+k-1}$, with $(P^{k}, g_{P})$ one of the projective spaces
\[
\ \mathbb{CP}^{n}\,(2n=k),\quad\mathbb{HP}^{p}\,(4p=k)\quad C\mathrm{a}\mathbb{P}^{2}\,(16=k),
\]
endowed with their standard Riemannian metrics. In the case of $\mathbb{CP}^{n}$ this is just the standard Fubini-Study metric. Note that, the following properties do hold:
\begin{itemize}
\item[(i)] Up to normalization, all these metrics have sectional curvatures bounded between $1$ and $4$;
\item[(ii)] There exists a nice family of isometric embeddings of these spaces into Euclidean spaces $\mathbb{R}^{q}$. In case of $\mathbb{CP}^{n}$, $q=(n+1)^2$, in case of $\mathbb{HP}^{p}$, $q=(p+1)(2p+1)$, and for $C\mathrm{a}\mathbb{P}^{2}$, $q=27$. For more details and references see \cite{ACS}, \cite{Chen}.
\item[(iii)] For the embeddings in point (ii) it holds that the second fundamental form $II^{P}$ satisfies
\begin{equation}\label{EmbPropr1}
|II^{P}(X,X)|^2=4|X|^4,
\end{equation}
for all vector fields $X\in TP$.
\end{itemize}
Using \eqref{EmbPropr1}, we have that for any $X, Y\in TP$
\begin{align*}
4|X-Y|^4=&|II^{P}(X-Y,X-Y)|^2=|II^{P}(X,X)+II^{P}(Y,Y)-2II^{P}(X,Y)|^2\\
=&|II^{P}(X,X)|^2+|II^{P}(Y,Y)|^2+4|II^{P}(X,Y)|^2+2\<II^{P}(X,X),II^{P}(Y,Y)\>\\
&-4\<II^{P}(X,X)+II^{P}(Y,Y),II^{P}(X,Y)\>\\
=&4|X|^4+4|Y|^4+4|II^{P}(X,Y)|^2+2\langle II^{P}(X,X),II^{P}(Y,Y)\rangle\\
&-4\langle II^{P}(X,X)+II^{P}(Y,Y),II^{P}(X,Y)\rangle^2.
\end{align*}
Analogously,
\begin{align*}
4|X+Y|^4=&4|X|^4+4|Y|^4+4|II^{P}(X,Y)|^2+2\langle II^{P}(X,X),II^{P}(Y,Y)\rangle\\
&+4\langle II^{P}(X,X)+II^{P}(Y,Y),II^{P}(X,Y)\rangle^2.
\end{align*}
Summing up,
\begin{align*}
&4(|X|^4+|Y|^4)+4|II^{P}(X,Y)|^2+2\langle II^{P}(X,X),II^{P}(Y,Y)\rangle\\=&2\left(|X-Y|^4+|X+Y|^4\right)\\
=&2\left(|X|^4+|Y|^4+4\<X,Y\>^2+2|X|^2|Y|^2-4\<X,Y\>(|X|^2+|Y|^2)+\right.\\
&\left.+|X|^4+|Y|^4+4\<X,Y\>^2+2|X|^2|Y|^2+4\<X,Y\>\left(|X|^2+|Y|^2\right)\right)\\
=&4\left(|X|^4+|Y|^4+4\<X,Y\>^2+2|X|^2|Y|^2\right).
\end{align*}
Hence, we get
\begin{equation}\label{EmbPropr2}
\langle II^{P}(X,X), II^{P}(Y, Y)\rangle+2|II^{P}(X,Y)|^2=4\left(|X|^2|Y|^2+2\<X,Y\>^2\right).
\end{equation}
Recall that by Gauss equation, for any $X, Y\in TP$, we have
\[
\ \langle R^{P}(X,Y)X, Y\rangle=\langle II^{P}(XX), II^{P}(Y,Y)\rangle-|II^{P}(X,Y)|^2.
\]
Therefore, we obtain
\begin{equation}\label{EmbPropr3}
|II^{P}(X, Y)|^2=\frac{4}{3}\left(|X|^2|Y|^2+2\<X,Y\>^2\right)-\frac{1}{3}\langle R^{P}(X,Y)X,Y\rangle.
\end{equation}
Consider now the embedding of $M^{m+1}$ in $\mathbb{R}^{d}$, $d=q+m+1-k$, obtained by trivially extending the above embeddings of $P$ in $\mathbb{R}^{q}$. Obviously, the only non-trivial part of the second fundamental form $II$ of this embedding is given by $II^{P}$. 

Let $\Sigma^m$ be a closed oriented minimal hypersurface isometrically immersed in $(M^{m+1}, g_{M})$ and, at any given point $p\in \Sigma$, consider an o.n. basis $\left\{e_{h}\right\}$ of $T_{p}\Sigma$. Let $N$ be the unit normal vector field to $\Sigma$ and, for any $U\in TM$, let us denote by $U^{P}$ the projection of $U$ on $TP$ and by $U^{R}$ the projection of $U$ on $T\mathbb{R}^{m+k-1}$. Using \eqref{EmbPropr3}, we have
\begin{equation}\label{Emb1}
\ |II(e_{h}, U)|^2=|II^{P}(e_{h}^{P}, U^{P})|^2=\frac{4}{3}\left(|e_{h}^{P}|^2|U^{P}|^2+2\<e_{h}^{P}, U^{P}\>^2\right)-\frac{1}{3}\langle R^{M}(e_{h}, U)e_{h}, U\rangle.
\end{equation}
Letting $\left\{F_{j}\right\}$ be an o.n. basis of $T_{\pi_{P}(p)}P$, we compute:
\begin{align*}
|U^{P}|^{2}\sum_{h}|e_{h}^{P}|^2=&|U^{P}|^2\sum_{h}\sum_{j=1}^{k}\<e_{h}, F_{j}\>^2\\
=&|U^{P}|^{2}\sum_{j=1}^{k}\left(|F_{j}|^2-\<F_{j}, N\>^2\right)\\
=&|U^{P}|^{2}\left(k-|N^{P}|^{2}\right);\\
2\sum_{h}\<e_{h}^{P}, U^{P}\>^{2}=&2\sum_{h}(\sum_{·j}\<e_{h}, F_{j}\>\<U, F_{j}\>)^2\\
=&2\sum_{h}\sum_{j}\<e_{h}, F_{j}\>^2\<U, F_{j}\>^2+4\sum_{h}\sum_{i\neq j}\<e_{h}, F_{j}\>\<e_{h}, F_{i}\>\<U, F_{j}\>\<U, F_{i}\>\\
=&2(\sum_{j}(|F_{j}|^2-\<F_{j}, N\>^2)\<U, F_{j}\>^2+2\sum_{i\neq j}(\<F_{i}, F_{j}\>-\<F_{i}, N\>\<F_{j}, N\>)\<U, F_{j}\>\<U, F_{i}\>)\\
=&2(\sum_{j}\<U, F_{j}\>^2-(\sum_{j}\<F_{j}, N\>^2\<U, F_{j}\>^2+2\sum_{i\neq j}\<F_{i}, N\>\<F_{j}, N\>\<U, F_{i}\>\<U, F_{j}\>))\\
=&2|U^{P}|^2-2(\sum_{j}\<F_{j}, N\>\<U,F_{j}\>)^2=2(|U^{P}|^2-\<U^{P}, N^{P}\>^2).
\end{align*}
Hence, substituting in \eqref{Emb1}, we get
\begin{equation}\label{Emb2}
\sum_{h}|II(e_{h}, U)|^2=\frac{4}{3}\left(|U^{P}|^{2}\left(k+2-|N^{P}|^2\right)-2\<U^{P}, N^{P}\>^{2}\right)-\frac{1}{3}\sum_{h}\langle R^{M}(e_{h}, U)e_{h}, U\rangle.
\end{equation}
In particular, for any $\omega$ $1$-form on $\Sigma$
\begin{align*}
\sum_{h}|II(e_{h}, N)|^2|\omega^2|=&\frac{4}{3}|\omega|^2|N^{P}|^{2}\left(k+2-3|N^{P}|^2\right)-\frac{1}{3}\mathrm{Ric}^{M}(N, N)|\omega|^2;\\
\sum_{h}|II(e_{h}, \omega^{\sharp})|^2=&\frac{4}{3}\left(|\omega^{P}|^{2}\left(k+2-|N^{P}|^2\right)-2\langle\omega^{P}, N^{P}\rangle^{2}\right)\\
&-\frac{1}{3}\mathrm{Ric}^{M}(\omega^{\sharp}, \omega^{\sharp})+\frac{1}{3}\langle R^{M}(\omega^{\sharp}, N)\omega^{\sharp}, N\rangle.
\end{align*}
Hence, using the same notations of the previous section, we have that
\begin{align}\label{CurvTermCROSSxR}
\mathcal{R}(\omega^{\sharp}, N)\doteq&\sum_{h}\left(|II(e_{h}, N)|^2|\omega|^2+|II(e_{h}, \omega^{\sharp})|^2\right)-\mathrm{Ric}^{M}(N, N)|\omega|^2-\mathrm{Ric}^{M}(\omega^{\sharp}, \omega^{\sharp})\\
&+\langle R^{M}(\omega^{\sharp}, N)\omega^{\sharp}, N\rangle-\mathrm{Hess}^{M}f(N, N)|\omega|^2-\mathrm{Hess}^{M}f(\omega^{\sharp}, \omega^{\sharp})\nonumber\\
=&\frac{4}{3}\left[|\omega|^{2}|N^{P}|^2\left(k+2-3|N^{P}|^2\right)+|\omega^{P}|^2\left(k+2-|N^{P}|^2\right)\right.\nonumber\\
&\left.\,\,\,\,\,\,-2\langle\omega^{P}, N^{P}\rangle^2-\mathrm{Ric}^{M}(N, N)|\omega|^2-\mathrm{Ric}^{M}(\omega^{\sharp}, \omega^{\sharp})+\langle R^{M}(\omega^{\sharp}, N)\omega^{\sharp}, N\rangle\right]\nonumber\\
&-\mathrm{Hess}^Mf(N, N)|\omega|^{2}-\mathrm{Hess}^{M}f(\omega^{\sharp}, \omega^{\sharp}).\nonumber
\end{align}
Let $\lambda$ be the Einstein constant of $\left(P, g_{P}\right)$. Since
\begin{align*}
\mathrm{Ric}^{M}(N, N)=&\lambda|N^{P}|^2,\\
\mathrm{Ric}^{M}(\omega^{\sharp}, \omega^{\sharp})=&\lambda|\omega^{P}|^2,\\
\langle R^{M}(\omega^{\sharp}, N)\omega^{\sharp}, N\rangle=&\langle R^{P}(\omega^{P}, N^{P})\omega^{P}, N^{P}\rangle\\
=&K^{P}(\omega\wedge N^{P})|\omega^{P}\wedge N^{P}|^2\\
\leq&4\left(|\omega^{P}||N^{P}|^2-\langle\omega^{P}, N^{P}\rangle^2\right),
\end{align*}
we get that
\begin{align}\label{est1}
\mathcal{R}(\omega^{\sharp}, N)\leq& \frac{4}{3}\left[|\omega|^{2}|N^{P}|^2\left(k+2-\lambda-3|N^{P}|^2\right)+|\omega^{P}|^{2}\left(k+2-\lambda-|N^{P}|^2\right)\right.\\
&\left.\,\,\,\,\,-6\langle\omega^{P}, N^{P}\rangle^{2}+4|\omega^{P}|^{2}|N^{P}|^2\right]\nonumber\\
&-\lambda|N^{R}|^2|\omega|^{2}-\lambda|\omega^{R}|^{2}\nonumber\\
\leq&\frac{4}{3}\left[|\omega|^{2}|N^{P}|^{2}\left(k+4-\lambda-3|N^{P}|^{2}\right)+|\omega^{P}|^{2}\left(k+4-\lambda-|N^{P}|^{2}\right)\right]-\eta,\nonumber
\end{align}
where 
\[
\ \eta=8\langle\omega^{P}, N^{P}\rangle^{2}+\lambda|N^{R}|^{2}|\omega|^{2}+\lambda|\omega^{R}|^{2}\geq 0,
\]
and in the last step we have used that
\[
\ 4|\omega^{P}|^{2}|N^{P}|^{2}\leq2|\omega^{P}|^{2}+2|\omega|^2|N^{P}|^2.
\]
Recalling now that the Einstein constants of $\mathbb{HP}^{p}$, $4p=k$, and $C\mathrm{a}\mathbb{P}^{2}$ are respectively $\lambda=k+8$ and $\lambda=36$ we get that in these cases $\mathcal{R}(\omega^{\sharp}, N)<0$. From this fact and Corollary \ref{maincorfminimal} the index estimate given in Corollary \ref{coroindCROSSxR} immediately follows.

Let us now restrict our attention to te case  $P^{k}=\mathbb{CP}^n$, $2n=k\geq 4$. In this case the Einstein constant is $\lambda=k+2$, hence we cannot determine the sign of the RHS in \eqref{est1}. However, by \eqref{est1},
\begin{align*}
\mathcal{R}(\omega^{\sharp}, N)\leq& \frac{4}{3}\left[-3|\omega|^{2}|N^{P}|^4-|\omega^{P}|^{2}|N^{P}|^2-6\langle\omega^{P}, N^{P}\rangle^{2}+4|\omega^{P}|^{2}|N^{P}|^2\right]\nonumber\\
&-(k+2)|N^{R}|^2|\omega|^{2}-(k+2)|\omega^{R}|^{2}\\
\leq& 4|\omega^{P}|^{2}|N^{P}|^2-4|\omega|^{2}|N^{P}|^4-8\langle\omega^{P}, N^{P}\rangle^{2}-(k+2)|N^{R}|^2|\omega|^{2}-(k+2)|\omega^{R}|^{2}\\
\leq& 4|\omega|^{2}|N^{P}|^{2}\left(1-|N^{P}|^{2}\right)-8\langle\omega^{P},N^{P}\rangle^{2}-(k+2)|N^{R}|^{2}|\omega|^{2}-(k+2)|\omega^{R}|^{2}\\
=&|\omega|^{2}|N^{R}|^{2}\left(4|N^{P}|^{2}-(k+2)\right)-8\langle\omega^{P}, N^{P}\rangle^{2}-(k+2)|\omega^{R}|^{2}\\
\leq& 0.
\end{align*}
We now analyze the equality case. We begin by observing that $\mathcal{R}(\omega^{\sharp}, N)=0$ if and only if
\begin{itemize}
\item[(i)] $\omega^R=N^R=0$;
\item[(ii)] $4|\omega|^2=4|\omega^P|^2=K^{P}(\omega^P\wedge N^{P})=K^{P}(\omega^\sharp\wedge N)=|\omega|^2+3g_P(\omega,JN)^2$, where $J$ denotes the complex structure in $P=\mathbb{CP}^n$.
\end{itemize}
In particular, note that (ii) is satisfied if and only if $\omega^\sharp =h JN$, for some $h\in C^\infty(\Sigma)$.\\

Finally, note that if we are able to show that the previous conditions can be satisfied if and only if $\omega$ vanishes identically on $\Sigma$, then the conclusion of the theorem follows at once. This  is the content of the lemma below.  
\end{proof}

\begin{lemma}
Let $P^k=\mathbb{C}\mathbb{P}^n$ with $k=2n\geq 4$ and let $\Sigma$ be a closed isometrically immersed $f$-minimal hypersurface in the weighted manifold $M_{f}^{m+1}=\left(P^k\times\mathbb{R}^{m-k+1},g_M, e^{-f}d\mathrm{vol}_M\right)$, $g_M=g_{P}+dt_1^2+\dots+dt_{m-k+1}^2$, with $g_{P}$ denoting the canonical metric on $P$, $d\mathrm{vol}_M$ the Riemannian volume measure, and $f(x,t)=\frac{k+2}{2}|t|^2$. Let $\omega$ be any $f$-harmonic 1-form on $\Sigma$ and suppose that
\begin{itemize}
\item[(i)] $\omega^R=N^R=0$;
\item[(ii)] $\omega^\sharp =h JN$, for some $h\in C^\infty(\Sigma)$, where $J$ denotes the complex structure in $P=\mathbb{CP}^n$.
\end{itemize}
Then $\omega\equiv 0$.
\end{lemma}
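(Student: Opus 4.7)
The plan is to argue by contradiction. Assume $\omega\not\equiv 0$ and set $U\doteq\{p\in\Sigma:\omega_p\neq 0\}$, an open non-empty subset of $\Sigma$. For each $j=1,\dots,m-k+1$, let $t_j$ denote the $j$-th Euclidean coordinate on $M=P\times\mathbb{R}^{m-k+1}$ and set $T_j\doteq\nabla^\Sigma t_j=(\partial_{t_j})^{\top}$, a globally smooth vector field on $\Sigma$. The crucial observation is that $\omega(T_j)\equiv 0$ on $\Sigma$: on $\Sigma\setminus U$ this is trivial since $\omega=0$; on $U$, hypothesis (i) gives $N^{R}=0$, hence $\partial_{t_j}\perp N$ and therefore $T_j=\partial_{t_j}$, while hypothesis (ii) reads $\omega^\sharp=hJN$ with $N\in TP$ and thus $JN\in TP$, so that $\omega(T_j)=h\langle JN,\partial_{t_j}\rangle=0$ because $JN\in TP$ is orthogonal to $\partial_{t_j}\in\mathbb{R}^{m-k+1}$ in the product metric.

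Since $\omega$ is $f$-harmonic, in particular $d\omega=0$, and Cartan's magic formula together with the identity above yields
\[
\mathcal{L}_{T_j}\omega=\iota_{T_j}d\omega+d\bigl(\omega(T_j)\bigr)=0,
\]
so the (globally defined, by compactness of $\Sigma$) flow $\phi^j_s$ of $T_j$ on $\Sigma$ preserves $\omega$, i.e.\ $(\phi^j_s)^{*}\omega=\omega$ for every $s\in\mathbb{R}$. Fix $p=(x_0,t_0)\in U$ and consider the curve $\gamma(s)\doteq\phi^j_s(p)$. On the maximal interval $[0,s_*)$ on which $\gamma$ stays in $U$ we have $T_j=\partial_{t_j}$ along $\gamma$, hence in coordinates $\gamma(s)=(x_0,t_0+se_j)$. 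Since $\Sigma$ is compact, the Euclidean ray $\{(x_0,t_0+se_j):s\geq 0\}$ cannot lie entirely in $\Sigma$, so $s_*\in(0,\infty)$ is finite and $q\doteq\gamma(s_*)=(x_0,t_0+s_*e_j)$ belongs to $\overline{U}\setminus U$; by continuity, $\omega_q=0$.

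Finally, evaluating $(\phi^j_{s_*})^{*}\omega=\omega$ at the point $p$ yields, for every $v\in T_p\Sigma$,
\[
\omega_p(v)=\omega_q\bigl(d\phi^j_{s_*}|_p(v)\bigr)=0,
\]
so $\omega_p=0$, contradicting $p\in U$. Hence $U=\emptyset$ and $\omega\equiv 0$. I expect the main conceptual hurdle to be the pointwise vanishing identity $\omega(T_j)\equiv 0$: this is the unique spot where both hypotheses (i)--(ii) must cooperate with the product structure of $M$ and with the fact that the complex structure $J$ preserves $TP$. Once this identity is secured, Cartan's formula and the compactness-versus-translation dichotomy close the argument in an essentially formal way.
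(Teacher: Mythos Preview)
Your argument is correct (assuming the Euclidean factor is nontrivial, i.e.\ $m-k+1\geq 1$) and follows a route genuinely different from the paper's. The paper proceeds via the weighted Bochner--Weitzenb\"ock formula: from $N^{R}=\omega^{R}=0$ it first obtains $\mathrm{Hess}\,f(\omega^\sharp,\omega^\sharp)=0$, then uses the Gauss equation, the value $4$ of the holomorphic sectional curvature, and the identity $h\nabla^M_{JN}N=-J\nabla h$ (borrowed from \cite{ACS}) to compute $\mathrm{Ric}_f(\omega^\sharp,\omega^\sharp)=(k-2)|\omega|^2-|\nabla h|^2$; combining with the elementary inequality $|\nabla h|^2\leq|\nabla\omega|^2$ and integrating the Bochner identity yields $(k-2)\int_\Sigma|\omega|^2e^{-f}\,d\mathrm{vol}_\Sigma\leq 0$, hence $\omega\equiv 0$ since $k\geq 4$.

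Your proof avoids all of this curvature machinery: it uses only that $d\omega=0$, that $J$ preserves $TP$, and compactness of $\Sigma$. In particular it never invokes $f$-minimality, the Einstein constant, or the hypothesis $n\geq 2$, so it is both more elementary and more robust with respect to the $P$-factor. The trade-off is that your mechanism is driven entirely by the Euclidean directions: when $m=k-1$, so that $M=\mathbb{CP}^n$ and $f\equiv 0$, there are no vector fields $T_j$ and your argument is empty, whereas the paper's Bochner computation (which in that case is exactly the one in \cite{ACS}) still applies. If you intend to cover that boundary case you should say so and cite \cite{ACS}; otherwise the proof stands as written.
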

\begin{proof}
\textbf{Step 1:} $h \nabla^M _{JN} N=\nabla^P_{JN} N=-J\nabla h$.\\
Since $\omega=\omega^P$ is a $f$-harmonic 1-form and $\nabla^M f=(k+2)T$, with $$T=t_1\frac{\partial}{\partial t_1}+\ldots+t_{m+k-1}\frac{\partial}{\partial t_{m+k-1}},$$ we have that 
\[
0=\delta_f \omega=\delta\omega +\langle \omega^\sharp,\nabla f\rangle=\delta \omega.
\]
In particular, $\omega$ is a harmonic 1-form and the conclusion of Step 1 follows reasoning, with minor modifications, as in \cite[Lemma A.1]{ACS}.\\
\medskip

\textbf{Step 2:} $\mathrm{Ric}_f(\omega^\sharp,\omega^\sharp)=(k-2)|\omega|^2-|\nabla h|^2$.\\
Note that, since $\omega^R=N^R=0$, we have
\begin{align*}
\mathrm{Hess} f (\omega^\sharp,\omega^\sharp)&=\mathrm{Hess}^M f(\omega^\sharp,\omega^\sharp)+\langle \nabla^M f,N\rangle\langle A\omega^\sharp,\omega^\sharp\rangle\\
&=(k+2)|\omega^R|^2+(k+2)\langle T,N^R\rangle\langle A\omega^\sharp,\omega^\sharp\rangle\\
&=0.
\end{align*}
Hence
\begin{align*}
\mathrm{Ric}_f(\omega^\sharp,\omega^\sharp)&=\mathrm{Ric}(\omega^\sharp,\omega^\sharp)\\
&=\mathrm{Ric}^P(\omega^\sharp,\omega^\sharp)-\langle R^P(\omega^\sharp,N)\omega^\sharp,N\rangle-|A\omega^\sharp|^2\\
&=(k+2)|\omega|^2-4|\omega|^2-|h \nabla^M _{JN} N|^2
\end{align*}
and Step 2 follows immediately from Step 1.\\
\medskip

\textbf{Step 3:} $|\nabla h|^2\leq |\nabla \omega|^2$.\\
Differentiating $\omega^\sharp=h JN$ we obtain, for any $X\in T\Sigma$,
\[
\nabla_X\omega^\sharp=h\nabla_X JN+\langle \nabla h,X\rangle JN.
\]
Since $\nabla_X JN$ and $JN$ are orthogonal, we immediately get
\[
|\nabla_X\omega^\sharp|^2=h^2|\nabla_X JN|^2+\langle \nabla h,X\rangle^2\geq \langle \nabla h,X\rangle^2.
\]
Step 3 follows at once.\\
\medskip 

\textbf{Step 4}: Conclusion.\\ 
Since $\omega$ is $f$-harmonic, by the $f$-Bochner-Weitzenb\"{o}ck formula and steps 2 and 3, we have that
\begin{align*}
-\frac{\Delta_f|\omega|^2}{2}&=|\nabla \omega|^2+\mathrm{Ric}_f(\omega^\sharp,\omega^\sharp)\\
&=|\nabla \omega|^2+(k-2)|\omega|^2-|\nabla h|^2\\
&\geq (k-2)|\omega|^2.
\end{align*} 
Integrating over $\Sigma$ we conclude that
\[
(k-2)\int_\Sigma |\omega|^2 e^{-f}d\mathrm{vol}_\Sigma\leq 0.
\]
Since $k>2$, we conclude that $\omega$ must vanish identically on $\Sigma$.
\end{proof}

\begin{acknowledgement*}
The first author is partially supported by INdAM-GNSAGA. The second author is partially supported by INdAM-GNAMPA.
\end{acknowledgement*}


\begin{thebibliography}{ccc}
\bibitem{ACS} L. Ambrozio, A. Carlotto and B. Sharp, \emph{Comparing the Morse index and the first Betti number of minimal hypersurfaces}, J. Differential Geom. \textbf{108} (2018), no. 3, 379--410. 

\bibitem{Bue} E. L. Bueler, \emph{The heat kernel weighted {H}odge {L}aplacian on noncompact manifolds}, Trans. Amer. Math. Soc. \textbf{351} (1999), no. 2, 683--713.

\bibitem{CCZ}H.-D. Cao, B.-L. Chen, and X.-P. Zhu, \emph{Recent developments on Hamilton's Ricci flow}, Surveys in Differential Geometry, \textbf{12} (2008), 47--112.

\bibitem{Chen}B.-Y. Chen, \emph{On the first eigenvalue of Laplacian of compact minimal submanifolds of rank one symmetric spaces}, Chinese J. Math. \textbf{11} (1983), no. 4, 1--15.

\bibitem{CMZ} X. Cheng, T. Mejia, D. Zhou, \emph{Simons-type equation for $f$-minimal hypersurfaces and applications}, J. Geom. Anal. \textbf{25} (2015), no. 4, 2667--2686. 

\bibitem{CZ} X. Cheng and D. Zhou, \emph{Stability properties and gap theorem for complete f-minimal hypersurfaces}, Bull. Braz. Math. Soc. (N.S.) \textbf{46} (2015), no. 2, 251--274.

\bibitem{IR1} D. Impera, M. Rimoldi, \emph{Stability properties and topology at infinity of $f$-minimal hypersurfaces}, Geom. Dedicata \textbf{178} (2015), 21--47. 

\bibitem{IRS} D. Impera, M. Rimoldi, A. Savo, \emph{Index and first Betti number of $f$-minimal hypersurfaces and self-shrinkers}, to appear on Rev. Mat. Iberoam.,  preliminary version on ArXiv Preprint Server - arXiv:1803.08268.

\bibitem{L} J. Lott, \emph{Mean curvature flow in a Ricci flow background}, Comm. Math. Phys. \textbf{313} (2012), no. 2, 517--533.

\bibitem{MMT} A. Magni, C. Mantegazza, E. Tsatis, \emph{Flow by mean curvature inside a moving ambient space}, J. Evol. Equ. \textbf{13} (2013), no. 3, 561--576. 

\bibitem{PW_Rig} P. Petersen, W. Wylie, \emph{Rigidity of gradient Ricci solitons}, Pacific J. Math. \textbf{241} (2009), no.2, 329--345.

\bibitem{MendRad} R. Mendes, M. Radeschi, \emph{Generalized immersions and minimal hypersurfaces in compact symetric spaces}. {A}rxiv preprint server - arxiv:1708.05881.

\bibitem{Ros} A. Ros, \emph{One-sided complete stable minimal surfaces}, J. Differential Geom. \textbf{74} (2006), no.1, 69--92.

\bibitem{Savo} A. Savo, \emph{Index bounds for minimal hypersurfaces of the sphere}, Indiana Univ. Math. J. \textbf{59} (2010), no.3, 823--837.

\bibitem{Zh} Z.-H. Zhang, \emph{Gradient shrinking solitons with vanishing Weyl tensor}, Pacific J. Math. \textbf{242} (2009), no.1, 189--200. 

\bibitem{ZG} P. Zhu and W. Gan, \emph{{$L_f$}-index bounds for closed {$f$}-minimal hypersurfaces}, {J. Math. Anal. Appl.} \textbf{454} (2017), no. 2, 502--509.

\end{thebibliography}
\end{document}